\title{The sum of the unitary divisor function}
\author{Tim Trudgian\footnote{Supported by Australian Research Council DECRA Grant DE120100173.}\\ Mathematical Sciences Institute\\ The Australian National University, ACT 0200, Australia\\ timothy.trudgian@anu.edu.au }
\newtheorem{thm}{Theorem}
\newtheorem{cor}{Corollary}
\begin{document}

\maketitle

\begin{abstract}
\noindent This article establishes a new upper bound on the function $\sigma^{*}(n)$, the sum of all coprime divisors of $n$. The main result is that $\sigma^{*}(n)\leq 1.3007 n \log \log n$ for all $n\geq 570,571$.
\\
\textit{AMS Codes: 11A25, 11N56}
\\
\textit{Keywords: unitary divisors, exponential divisors, arithmetic functions.}
\end{abstract}

\section{Introduction}
\subsection{The function $\sigma(n)$}\label{S1.1}
Let $\sigma(n)$ denote the sum of the divisors of $n$; for example, $\sigma(12) = 1+ 2+ 3+ 4+ 6+ 12 = 28.$ 
In 1913 Gr\"{o}nwall showed that 
\begin{equation}\label{p1}
\limsup \sigma(n)/(n \log\log n) = e^{\gamma} = 1.78107\ldots,
\end{equation}
where $\gamma$ is Euler's constant. A proof is given in \cite[Thm.\ 322]{HW}. Robin showed that the manner in which (\ref{p1}) behaves is connected with the Riemann hypothesis. More precisely, he showed, in \cite{Robineg}, that for $n\geq 5041$ the inequality
\begin{equation}\label{rc}
\sigma(n) < e^{\gamma} n \log\log n
\end{equation}
is equivalent to the Riemann hypothesis.
Ivi\'{c} \cite{IvicSum} showed that
\begin{equation*}\label{ivic1}
\sigma(n) < 2.59 n \log\log n, \quad (n\geq 7),
\end{equation*}
which was improved by Robin [\textit{op.\ cit.}] to
\begin{equation}\label{robin1}
\sigma(n) < \frac{\sigma(12)}{12\log\log 12} n \log\log n \leq 2.5634n\log\log n, \quad (n\geq 7).
\end{equation}
Akbary, Friggsted and Juricevic \cite{AFJ} improved this further, replacing the right-side of (\ref{robin1}) with 
\begin{equation}\label{aft}
\frac{\sigma(180)}{180 \log\log 180}n \log\log n \leq 1.8414n \log\log n \leq 1.0339e^{\gamma}n \log\log n, \quad (n \geq 121).
\end{equation}
Given Robin's criterion for the Riemann hypothesis in (\ref{rc}) it is reasonable to suggest that (\ref{aft}) is close to the best bound that one may hope to exhibit.
\subsection{The function $\sigma^{*}(n)$}\label{S1.2}
We say that $d$ is a unitary divisor of $n$ if $d|n$ and $(d, n/d) =1$. Let $\sigma^{*}(n) = \sum_{d|n, (d, n/d) =1} d$ be the sum of all unitary divisors of $n$; for example, $\sigma^{*}(12) = 1+ 12 + 3 + 4 = 20$.
Robin \cite[p.\ 210]{Robineg} notes that the proof of (\ref{p1}) can be adapted to show that
\begin{equation}\label{p2}
\limsup \sigma^{*}(n)/(n \log\log n) = \frac{6e^{\gamma}}{\pi^{2}}= 1.08\ldots,
\end{equation}
see also \cite[p.\ 21]{IvicSum}.
Ivi\'{c} \cite{IvicSum} showed that
\begin{equation*}\label{ivic2}
\sigma^{*}(n) < \frac{28}{15} n  \log\log n, \quad (n\geq 31).
\end{equation*}
This was improved by Robin who showed that
\begin{equation*}\label{rob2}
\sigma^{*}(n) < 1.63601 n  \log\log n, \quad (n\geq 31),
\end{equation*}
except for $n=42$ when $\sigma^{*}(n) = 1.7366\ldots n\log\log n$. A direct comparison of these results with those in \S \ref{S1.1} compels us to ask the following questions.
\begin{enumerate}
\item Given (\ref{p2}) can a Robin-esque criterion for the Riemann hypothesis \`{a} la (\ref{rc}) be given for $\sigma^{*}(n)$? \label{a}
\item Analogous to (\ref{aft}) can one obtain a relatively close approximation to (\ref{p2}) of the form
\begin{equation*}
\sigma^{*}(n) < (1+ \epsilon) \frac{6e^{\gamma}}{\pi^{2}} n  \log\log n, \quad (n\geq n_{0}),
\end{equation*}
for reasonably small values of $\epsilon$ and $n_{0}$? \label{b}
\end{enumerate}

Concerning \ref{a}, Robin has conjectured \cite[Prop.\ 1 (i), p.\ 210]{Robineg} that there are infinitely many $n$ for which $$\sigma^{*}(n) > \frac{6 e^{\gamma}}{\pi^{2}} n \log\log n.$$ A related conjecture is given in Proposition 1 (ii) in \cite{Robineg}, viz.\ that 
\begin{equation}\label{f1}
\frac{\sigma(n)}{\sigma^{*}(n) \log\log n} < e^{\gamma},
\end{equation} 
for all $n$ sufficiently large. The interest in this conjecture stems from the limiting relation $$\lim\sup\frac{\sigma(n)}{\sigma^{*}(n) \log\log n} = e^{\gamma}.$$ Derbal \cite{Derbal} proved (\ref{f1}) for all $n \geq 17$.

This article answers Question \ref{b} above, at least partially, by proving
\begin{thm}\label{maine}
For $n\geq 570,571$,
\begin{equation}\label{main}
\sigma^{*}(n) \leq 1.3007 n \log\log n.
\end{equation}
\end{thm}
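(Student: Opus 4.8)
The plan is to exploit the multiplicativity of $\sigma^{*}$. Since $\sigma^{*}(p^{a}) = 1 + p^{a}$, writing $n = \prod_{i} p_{i}^{a_{i}}$ gives $\sigma^{*}(n)/n = \prod_{i}(1 + p_{i}^{-a_{i}}) \leq \prod_{p \mid n}(1 + 1/p)$, with equality exactly when $n$ is squarefree. Thus it suffices to bound the ratio for squarefree $n$, and for such $n$ the quantity $\prod_{p\mid n}(1+1/p)$, for a fixed number $\omega(n) = k$ of prime divisors, is maximised by taking the $k$ smallest primes. First I would record this reduction, together with the observation that a number with $k$ distinct prime factors satisfies $n \geq N_{k} := \prod_{i\le k} p_{i}$, so that $\log\log n \geq \log\log N_{k}$. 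Consequently, for each $k$, the ratio $\sigma^{*}(n)/(n\log\log n)$ is controlled by the primorial value $R_{k} := \prod_{i\le k}(1+1/p_{i})\,/\,\log\log N_{k}$.

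Second, I would split according to $\omega(n) = k$. A short computation shows that $R_{k}$ decreases through the values $R_{7} = 1.324\ldots > 1.3007 > R_{8} = 1.292\ldots$, and that $R_{k}\to 6e^{\gamma}/\pi^{2} = 1.08\ldots$ as $k\to\infty$ by (\ref{p2}). For $k \ge 8$ one has $n \geq N_{8} = 9\,699\,690 > 570\,571$, so the claim reduces to showing $R_{k} \le 1.3007$ for every $k \ge 8$; I would verify the moderate values directly and dispose of the tail using effective forms of Mertens' theorems, via the identity $\prod_{p\le x}(1+1/p) = \prod_{p\le x}(1-p^{-2})\prod_{p\le x}(1-1/p)^{-1}$ together with explicit estimates for $\prod_{p\le x}(1-1/p)^{-1}$ and for $\theta(p_{k}) = \log N_{k}$, which force $R_{k}$ below $1.3007$ once $k$ exceeds an explicit threshold.

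For the small-$\omega$ range the analysis is more delicate and is where the precise constant $570\,571$ is forced. When $\omega(n)\le 6$ the product $\prod_{p\mid n}(1+1/p)$ is at most $\prod_{i\le 6}(1+1/p_{i}) = 3.222\ldots$, and since $\log\log n \geq \log\log 570\,571 = 2.584\ldots$ the ratio stays below $1.25$. The critical case is $\omega(n) = 7$: here $\prod_{p\mid n}(1+1/p) \le \prod_{i \le 7}(1+1/p_{i}) = 3.412\ldots$, and for $n$ large enough (roughly $n \ge 10^{6}$) the lower bound on $\log\log n$ already yields the claim, so that only finitely many squarefree $7$-factor numbers $n$ in $[570\,571, 10^{6})$ remain to be checked individually. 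The threshold is sharp because the near-primorial $570\,570 = 2\cdot3\cdot5\cdot7\cdot11\cdot13\cdot19$ has ratio $1.312\ldots > 1.3007$, and $570\,571 = 570\,570 + 1$ is chosen precisely to exclude it.

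The main obstacle is the tail estimate for $R_{k}$: one must convert the asymptotic (\ref{p2}) into an inequality valid for all $k$ beyond an explicit and not-too-large point, which requires pairing an explicit upper bound for $\prod_{p\le x}(1+1/p)$ with an explicit lower bound for $\log\theta(p_{k})$ and confirming that the resulting correction factors never push $R_{k}$ back above $1.3007$. The remaining difficulty is bookkeeping rather than conceptual: enumerating and verifying the finitely many $7$-factor numbers near the threshold, and checking that no value with $\omega(n)\le 6$ or $\omega(n)\ge 8$ re-enters the exceptional range.
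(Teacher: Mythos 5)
Your outline is correct and shares its skeleton with the paper's proof: both reduce the problem to the primorial quantity $\prod_{i\leq k}(1+1/p_{i})/\log\log N_{k}$, dispose of large $k$ with explicit prime-counting estimates, of intermediate $k$ by direct computation, and of the small range by a finite check. Two of your choices genuinely differ from the paper, and both are improvements in principle. First, for the tail you propose the identity $\prod_{p\leq x}(1+1/p)=\prod_{p\leq x}(1-p^{-2})\cdot\prod_{p\leq x}(1-1/p)^{-1}$ combined with an explicit Mertens bound for the last factor; this has asymptotic constant $6e^{\gamma}/\pi^{2}=1.0827\ldots$, whereas the paper bounds $\prod(1+1/p)\leq\exp\bigl(\sum 1/p\bigr)$, whose constant is $e^{B}=1.29887\ldots$ --- precisely the barrier the paper concedes in (\ref{lop}) that its method cannot cross. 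Your route would therefore lower the threshold at which the analytic tail bound takes over (shrinking the computed range $8\leq k\leq 10^{6}$) and could, with care, yield a constant below $1.29887$. Second, you stratify the range $n<N_{8}=9{,}699{,}690$ by $\omega(n)$ rather than evaluating $\sigma^{*}(n)$ for every such $n$: the cases $\omega(n)\leq 6$ are killed by $\prod_{i\leq 6}(1+1/p_{i})=3.222\ldots<1.3007\log\log 570{,}571=3.361\ldots$, the cases $\omega(n)=7$ with $n\geq 10^{6}$ by $\prod_{i\leq 7}(1+1/p_{i})=3.4125\ldots<1.3007\log\log 10^{6}=3.4153\ldots$ (a very tight margin you should display explicitly), and only a handful of squarefree seven-prime numbers in $[570{,}571,\,10^{6})$ need individual inspection (harmlessly, every integer in that interval with seven distinct prime factors is automatically squarefree, the least non-squarefree one being $1{,}021{,}020$). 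This is a much lighter computation than the paper's sweep of all $n\leq 9{,}699{,}690$. What remains for a complete write-up is the explicit form of the Mertens and $\theta$ bounds you invoke (e.g.\ Rosser--Schoenfeld or Dusart) and the verification of $R_{k}\leq 1.3007$ for $k$ from $8$ up to your analytic threshold; none of this presents an obstacle.
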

It takes less than 40 seconds on a 1.8GHz laptop to compute $\sigma^{*}(n)$ for all $1\leq n \leq 570,570$. One may therefore justify the number 570,571 appearing in Theorem \ref{maine} as being `reasonably small', as stipulated in Question 2, as least in regards to computational resources. 

It would be of interest to address the following problem. Fix an $\epsilon>0$ and determine the least value of $n_{0}$ such that $\sigma^{*}(n) < (1+ \epsilon) \frac{6e^{\gamma}}{\pi^{2}} n  \log\log n$ for all $n\geq n_{0}$. The method used to prove Theorem \ref{maine} is incapable of reducing the right-side of (\ref{main}) to anything less than $1.29887 n \log\log n$.

Theorem \ref{maine} is proved in \S \ref{pet}. An application is given in \S \ref{bbb}. Two concluding questions are raised in \S \ref{four}.
\section{Proof of Theorem \ref{maine}}\label{pet}
We proceed as in Robin \cite[p.\ 211]{Robineg}. It is sufficient to verify the inequality on numbers $N_{k} = \prod_{i=1}^{k} p_{i}$, where $k\geq 2$, since, for $N_{k} \leq n < N_{k+1},$ we have $\sigma^{*}(n)/n \leq \sigma^{*}(N_{k})/N_{k}$, whence
\begin{equation}\label{gg}
\frac{\sigma^{*}(n)}{n \log \log n} \leq \frac{\sigma^{*}(N_{k})}{N_{k} \log \log N_{k}}.
\end{equation}
Since $\sigma^{*}(p^{\alpha}) = 1+ p^{\alpha}$ and $\sigma^{*}(n)$ is a multiplicative function, the right-side of (\ref{gg}) is
\begin{equation}\label{gh}
\frac{\prod_{i\leq k} \left( 1+p_{i}^{-1}\right)}{\log \theta(p_{k})},
\end{equation}
where $\theta(x) = \sum_{p \leq x} \log p$.
To bound the numerator in (\ref{gh}) we use
\begin{equation*}\label{dus1}
\sum_{p\leq x} \frac{1}{p} \leq \log\log x + B + \frac{1}{10 \log^{2}x} + \frac{4}{15\log^{3} x}, \quad(x\geq 10,372),
\end{equation*}
where $$ B = \gamma + \sum_{p\geq 2} \left\{\log\left(1 - \frac{1}{p}\right) + \frac{1}{p}\right\} = 0.26149\ldots,$$
see Dusart \cite{Dusart1999}. To bound the denominator in (\ref{gh}) we use
\begin{equation*}\label{dus2}
\theta(x) \geq x\left( 1 - \frac{0.006788}{\log x}\right), \quad (x \geq 10,544,111),
\end{equation*}
which is also found in \cite{Dusart1999}.
Therefore, since $e^{x} \geq x+1$ we have
\begin{equation*}
\prod_{i\leq k} \left( 1 + \frac{1}{p_{i}}\right) \leq \exp\left( \sum_{i\leq k} \frac{1}{p_{i}}\right)\leq A_{1}(p_{k}) \log p_{k},
\end{equation*}
where 
\begin{equation*}
A_{1}(x) = \exp\left(B + \frac{1}{10 \log^{2} x} + \frac{4}{15 \log^{2} x}\right), \quad (x \geq 10,372).
\end{equation*}
Also
\begin{equation*}
\log\theta(p_{k}) \geq A_{2}(p_{k})\log p_{k} ,
\end{equation*}
where 
\begin{equation*}
A_{2}(x) = 1 + \frac{\log(1 - 0.006788/\log x)}{\log x}, \quad (x\geq 10,544,111).
\end{equation*}
 It is clear that 
\begin{equation}\label{lop}
A_{2}(x) < 1< e^{B} = 1.29887\ldots < A_{1}(x).
\end{equation}
We choose a suitably large lower bound on $k$ in order to make $A_{1}(x)$ and $A_{2}(x)$ sufficiently close to $e^{B}$ and $1$ respectively.
Indeed, we shall bound (\ref{gh}) for $p_{k} \geq 15,485,863$, which is equivalent to $k\geq 1,000,000$. Therefore 
\begin{equation}\label{19}
\frac{\prod_{i\leq k} \left( 1+p_{i}^{-1}\right)}{\log \theta(p_{k})}\leq \frac{A_{1}(p_{k})}{A_{2}(p_{k})} \leq 1.3007,
\end{equation}
whence 
\begin{equation}\label{main2}
\frac{\sigma^{*}(n)}{n \log \log n} \leq \frac{\sigma^{*}(N_{k})}{N_{k} \log \log N_{k}} \leq 1.3007,
\end{equation}
for all $k\geq 10^6.$
One may check that (\ref{main2}) also holds for $8\leq k \leq 10^{6}.$ On a single core PC with 32 GB of RAM, this calculation took less than a minute using \textit{Magma}. All that remains are the numbers $3\leq n \leq p_{1}\cdots p_{8} = 9,699,690.$ A quick computational check shows that
$$ \frac{\sigma^{*}(570,570)}{570,570 \log \log 570,570} \geq 1.3125,$$
and that, for all $n> 570,570$, the inequality (\ref{main}) holds, which proves Theorem \ref{maine}. Were this lower bound on $n$ too large for one's tastes, one could also show
\begin{equation*}\label{main3}
\sigma^{*}(n) \leq 1.3007 n \log\log n,
\end{equation*} 
for all $n\geq 53,131$ with only two exceptions, namely
\begin{equation*}
\begin{split}
\sigma^{*}(510,510) &= (1.3245\ldots) 510,510 \log\log 510,510, \quad \textrm{and} \\
\sigma^{*}(570,570) &= (1.3125\ldots) 570,570 \log\log 570,570.
\end{split}
\end{equation*}
Our bounds for $\sigma^{*}(n)$ depend on an upper bound for $A_{1}(p_{k})/A_{2}(p_{k})$ in (\ref{19}). We see at once from (\ref{lop}) that our method is incapable of reducing the bound 1.3007 in Theorem \ref{maine} to anything below 1.29887.
\section{Application to exponential divisors}\label{bbb}
%Minculete \cite{Minc} has applied (\ref{ivic2}) to exponential  divisors. 
Given an $n = p_{1}^{a_{1}}\cdots p_{s}^{a_{s}}$ the integer $d=p_{1}^{b_{1}}\cdots p_{s}^{b_{s}}$ is an \textit{exponential divisor} of $n$ if $b_{j}|a_{j}$ for every $1 \leq j\leq s$. %For example, the exponential divisors of $16 = 2^{4}$ are $2, 4 (= 2^{2})$, and  $16 (= 2^{4})$. 
Define the functions $d^{(e)}(n)$ and $\sigma^{(e)}(n)$ to be the number of exponential divisors of $n$ and the sum of the exponential divisors of $n$, respectively. Since these functions are multiplicative we have
\begin{equation*}\label{bb}
d^{(e)}(n) = \prod_{j=1}^{r}d(a_{j}), \quad \sigma^{(e)}(n) = \prod_{j=1}^{r} \left( \sum_{b_{j}|a_{j}} p_{j}^{b_{j}}\right),
\end{equation*}
where $d(n)$ is the number of divisors of $n$. Minculete \cite[Thm. 2.1 and Cor. 2.5]{Minc} has given the following bounds for $\sigma^{(e)}(n)$ and $d(n) d^{(e)}(n)$ 
\begin{equation*}\label{minx1}
\sigma^{(e)}(n) \leq \frac{28}{15} n \log\log n, \quad (n \geq 6),
\end{equation*}
\begin{equation*}\label{minx2}
d^{(e)}(n)d(n) \leq \frac{28}{15}n \log\log n, \quad (n \geq 5).
\end{equation*}
An application of the proof of Theorem \ref{maine} improves these bounds.
\begin{cor}
For $n \geq 37$,
\begin{equation}\label{cor1}
\sigma^{(e)}(n) \leq 1.3007 n \log\log n.
\end{equation}
For $n \geq 8$,
\begin{equation}\label{cor2}
d^{(e)}(n)d(n) \leq 1.3007 n \log\log n.
\end{equation}
\end{cor}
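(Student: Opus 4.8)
The plan is to reduce both inequalities to the primorial estimate already obtained in the proof of Theorem~\ref{maine}. The engine is a single prime-power bound for $\sigma^{(e)}$: for every prime $p$ and every $a \ge 1$,
\begin{equation*}
\sigma^{(e)}(p^{a}) = \sum_{b \mid a} p^{b} \le \Bigl(1 + \tfrac{1}{p}\Bigr)p^{a},
\end{equation*}
with equality precisely when $a = 2$. The cases $a = 1$ (left side $p$) and $a = 2$ (left side $p + p^{2}$) are immediate, and for $a \ge 3$ every proper divisor $b$ of $a$ satisfies $a - b \ge 2$, so that $\sum_{b \mid a,\, b < a} p^{b-a} \le \sum_{j \ge 2} p^{-j} = \{p(p-1)\}^{-1} \le p^{-1}$. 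Since $\sigma^{(e)}$ is multiplicative this yields, writing $k = \omega(n)$ for the number of distinct primes of $n$,
\begin{equation*}
\frac{\sigma^{(e)}(n)}{n} = \prod_{p^{a}\,\|\,n} \frac{\sigma^{(e)}(p^{a})}{p^{a}} \le \prod_{p \mid n}\Bigl(1 + \frac{1}{p}\Bigr) \le \prod_{i \le k}\Bigl(1 + \frac{1}{p_{i}}\Bigr) = \frac{\sigma^{*}(N_{k})}{N_{k}},
\end{equation*}
the penultimate inequality holding because $1 + 1/p$ decreases in $p$, and the last product being recognised as $\sigma^{*}(N_{k})/N_{k}$ since $N_{k}$ is squarefree.

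With this in hand the argument tracks the proof of Theorem~\ref{maine}. If $k = \omega(n) \ge 8$ then $n \ge N_{k}$, so $\log\log n \ge \log\log N_{k}$, and the display above together with \eqref{main2} gives
\begin{equation*}
\frac{\sigma^{(e)}(n)}{n\log\log n} \le \frac{\sigma^{*}(N_{k})}{N_{k}\log\log N_{k}} \le 1.3007 .
\end{equation*}
If instead $\omega(n) \le 7$ then $\prod_{p\mid n}(1 + 1/p) \le \prod_{i\le 7}(1 + 1/p_{i}) < 3.412$, whence $\sigma^{(e)}(n) < 3.412\, n$, which lies below $1.3007\, n\log\log n$ as soon as $\log\log n \ge 3.412/1.3007$, i.e.\ for all $n$ beyond an explicit bound near $10^{6}$. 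Combining the two cases, \eqref{cor1} holds for every $n \ge N_{8} = 9{,}699{,}690$ (when $\omega(n) \le 7$ one uses $n \ge N_{8} > 10^{6}$), and a direct computation of the same flavour as in \S\ref{pet} verifies it on the remaining range $37 \le n < N_{8}$, while exhibiting a violation at $n = 36$; this pins down the threshold $37$.

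For \eqref{cor2} I would prove the companion prime-power inequality $d^{(e)}(p^{a})\,d(p^{a}) = d(a)(a+1) \le \sum_{b\mid a} p^{b} = \sigma^{(e)}(p^{a})$, valid for all $a \ge 1$ and $p \ge 2$; the binding case $p = 2$ (with equality at $a = 1, 2$) is a routine check, the left side being polynomial in $a$ while the right side is exponential. Multiplicativity then gives $d^{(e)}(n)\,d(n) \le \sigma^{(e)}(n)$ for every $n$, so \eqref{cor2} follows from \eqref{cor1} for $n \ge 37$, with the short range $8 \le n \le 36$ checked by hand. The main obstacle is the first prime-power inequality: because it is an \emph{equality} at $a = 2$ there is no slack, so a crude estimate (for instance summing a geometric series off the bound $b \le a/2$) is too weak, and one is forced to isolate $a = 2$ from $a \ge 3$. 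The only other delicate point is that the primorial bound is useless when $\omega(n)$ is small; there one must instead exploit the growth of $\log\log n$, leaving a genuinely finite computation to finish.
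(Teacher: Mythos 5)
Your proposal is correct and follows essentially the same route as the paper: reduce $\sigma^{(e)}(n)/n$ to $\prod_{p\mid n}(1+1/p)$, compare against the primorial bound $\sigma^{*}(N_k)/(N_k\log\log N_k)\le 1.3007$ from the proof of Theorem~\ref{maine}, finish the range below $N_8$ by computation, and deduce \eqref{cor2} from $d^{(e)}(n)d(n)\le\sigma^{(e)}(n)$. The only difference is that you prove the two auxiliary inequalities ($\sigma^{(e)}(n)\le n\prod_{p\mid n}(1+1/p)$ and $d^{(e)}(n)d(n)\le\sigma^{(e)}(n)$) from scratch, correctly isolating the equality case $a=2$, whereas the paper simply cites both from Minculete.
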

\begin{proof}
The displayed formula halfway down page 1529 in \cite{Minc} gives
$$ \sigma^{(e)} \leq n \prod_{p|n} \left( 1+ \frac{1}{p}\right),$$
so that
\begin{equation}\label{cook}
\frac{\sigma^{(e)}(n)}{n \log \log n} \leq \frac{\prod_{p|n} \left( 1+ \frac{1}{p}\right)}{\log\log n}.
\end{equation}
As before, we need only consider (\ref{cook}) on $N_{k}\leq n < N_{k+1}$. Using (\ref{19}) and the calculations in \S \ref{pet} we have
$$\frac{\sigma^{(e)}(n)}{n \log \log n} \leq 1.3007, \quad(n \geq 9,699,691).$$
Checking the range $37\leq n \leq 9,699,691$ establishes (\ref{cor1}).
Minculete \cite[Eq.~(12)]{Minc} showed that $d(n) d^{(e)}(n) \leq \sigma^{(e)}(n)$ for all $n\geq 1$. Using this, (\ref{cor1}), and a simple computer check for $8\leq n \leq 36$, establishes~(\ref{cor2}).
\end{proof}

\section{Conclusion}\label{four}
Both of the functions $\sigma^{*}(n)$ and $\sigma^{(e)}(n)$ are multiplicative. We have
$$ \sigma^{*}(p) = 1+ p > \sigma^{(e)}(p) = p,$$
and, for $a\geq 2$,
$$\sigma^{*}(p^{a}) = 1+ p^{a} < p+ p^{a} \leq \sigma^{(e)}(p^{a}),$$
since $a = a \cdot 1$, where $a$ and $1$ are distinct. Therefore, on square-free numbers $\sigma^{*}(n) > \sigma^{(e)}(n)$. We conclude this section by raising two questions.
\begin{enumerate}
\item What is the proportion of $n$ for which $\sigma^{*}(n)> \sigma^{(e)}(n)$?
\item Are there infinitely many values of $n$ for which $\sigma^{*}(n) = \sigma^{(e)}(n)$?
\end{enumerate}
The proportion in Question 1 must be at least that of the square-free numbers, viz.\ $6/\pi^{2} \approx 0.607.$ A computation shows the proportion of $1\leq n \leq 10^{9}$ to be approximately 0.778307. It follows from the Erd\H{o}s--Wintner theorem (see, e.g., \cite[III.4]{Tenbook}) that the density of $n$ for which $\sigma^{*}(n)> \sigma^{(e)}(n)$ is well defined. In \cite{Deleglise} the density of the set of integers $n$ for which $\sigma(n)/n \geq 2$ was estimated. It seems possible that similar methods may be brought to bear on Question 1.

As for Question 2, only five values of $n$ were found in the range $1\leq n \leq 10^{9}$ for which $\sigma^{*}(n) = \sigma^{(e)}(n)$, namely
$$ n = 20, 45, 320, 6615, 382200.$$
Andrew Lelechenko has also found 
$$n= 680890228200,$$
which is the next smallest $n$ after 382200. He has also communicated to me that $\sigma^{*}(n) = \sigma^{(e)}(n)$ also for 
$$n=2456687209744634987008753664 = 2^{49} \times
4363953127297.$$

\subsection*{Acknowledgements}
I am grateful to Danesh Jogia who verified (\ref{main2}) for $8\leq k \leq 10^{6}$, Scott Morrison who provided a much-needed tutorial on programming,  Greg Martin for a discussion on limiting distributions, and Andrew Lelechenko for providing the last examples in \S 4.

\bibliographystyle{plain}
\bibliography{themastercanada}

\end{document}